\numberwithin{equation}{section}
\newtheorem{theorem}{Theorem}
\newtheorem{lemma}{Lemma}
\theoremstyle{definition}
\newtheorem{definition}{Definition}
\newtheorem*{rem}{{\bf Remark}}
\newcommand{\R}{\mathbb{R}}
\newcommand{\p}{\partial}
\newcommand{\eps}{\varepsilon}
\begin{document}

\begin{center}
\Large{\textbf{
Strong instability of standing waves \\
for nonlinear Schr\"{o}dinger equations \\
with double power nonlinearity}}
\end{center}

\vspace{5mm}

\begin{center}
{\large Masahito Ohta} \hspace{1mm} and \hspace{1mm}
{\large Takahiro  Yamaguchi}
\end{center}
\begin{center}
Department of Mathematics, 
Tokyo University of Science, \\
1-3 Kagurazaka, Shinjuku-ku, Tokyo 162-8601, Japan
\end{center}

\begin{abstract}
We prove strong instability (instability by blowup) 
of standing waves for some nonlinear Schr\"{o}dinger equations 
with double power nonlinearity. 
\end{abstract}

\section{Introduction}\label{sect:intro}

In this paper, 
we study instability of standing wave solutions $e^{i\omega t}\phi_{\omega}(x)$ 
for nonlinear Schr\"{o}dinger equations 
with double power nonlinearity: 
\begin{align}\label{nls}
i\p_t u=-\Delta u-a|u|^{p-1}u-b|u|^{q-1}u,
\quad (t,x)\in \R\times \R^N, 
\end{align}
where $a$ and $b$ are positive constants, 
$1<p<q<2^*-1$, $2^*=2N/(N-2)$ if $N\ge 3$, 
and $2^*=\infty$ if $N=1,2$. 

Moreover, we assume that $\omega>0$ and $\phi_{\omega}\in H^1(\R^N)$ is a ground state of 
\begin{align}\label{sp}
-\Delta \phi+\omega \phi-a|\phi|^{p-1}\phi-b|\phi|^{q-1}\phi=0, 
\quad x\in \R^N. 
\end{align}
For the definition of ground state, see \eqref{gs} below. 
It is well known that there exists a ground state $\phi_{\omega}$ of \eqref{sp} 
(see, e.g., \cite{BL,St}). 

The Cauchy problem for \eqref{nls} is locally well-posed in the energy space $H^1(\R^N)$
(see, e.g., \cite{caz,GV,Kato}). 
That is, for any $u_0\in H^1(\R^N)$ there exist $T^*=T^*(u_0)\in (0,\infty]$ 
and a unique solution $u\in C([0,T^*),H^1(\R^N))$ 
of \eqref{nls} with $u(0)=u_0$ such that 
either $T^*=\infty$ (global existence) 
or $T^*<\infty$ and $\displaystyle{\lim_{t\to T^*}\|\nabla u(t)\|_{L^2}=\infty}$ 
(finite time blowup). 

Furthermore, the solution $u(t)$ satisfies 
\begin{align}\label{conservation}
E(u(t))=E(u_0), \quad \|u(t)\|_{L^2}^2=\|u_0\|_{L^2}^2
\end{align}
for all $t\in [0,T^*)$, 
where the energy $E$ is defined by 
\begin{align*}
E(v)=\frac{1}{2}\|\nabla v\|_{L^2}^2
-\frac{a}{p+1}\|v\|_{L^{p+1}}^{p+1}
-\frac{b}{q+1}\|v\|_{L^{q+1}}^{q+1}.
\end{align*}

Here we give the definitions of stability and instability of standing waves. 

\begin{definition}
We say that the standing wave solution 
$e^{i\omega t}\phi_{\omega}$ of \eqref{nls} is {\em stable} 
if for any $\eps>0$ there exists $\delta>0$ such that 
if $\|u_0-\phi_{\omega}\|_{H^1}<\delta$, 
then the solution $u(t)$ of \eqref{nls} with $u(0)=u_0$ exists globally and satisfies 
$$\sup_{t\ge 0}\inf_{\theta\in \R,y\in \R^N}\|u(t)-e^{i\theta}\phi_{\omega}(\cdot+y)\|_{H^1}<\eps.$$ 
Otherwise, $e^{i\omega t}\phi_{\omega}$ is said to be {\em unstable}. 
\end{definition}

\begin{definition}
We say that $e^{i\omega t}\phi_{\omega}$ is {\em strongly unstable}  
if for any $\eps>0$ there exists $u_0\in H^1(\R^N)$ such that 
$\|u_0-\phi_{\omega}\|_{H^1}<\eps$ and 
the solution $u(t)$ of \eqref{nls} with $u(0)=u_0$ blows up in finite time.
\end{definition}

Before we consider the double power case, 
we recall some well-known results 
for the single power case: 
\begin{align}\label{nls1}
i\p_t u=-\Delta u-|u|^{p-1}u, 
\quad (t,x)\in \R\times \R^N. 
\end{align}
When $1<p<1+4/N$, 
the standing wave solution $e^{i\omega t}\phi_{\omega}$ of \eqref{nls1} is 
stable for all $\omega>0$ (see \cite{CL}). 
While, if $1+4/N\le p<2^*-1$, 
then $e^{i\omega t}\phi_{\omega}$ is strongly unstable for all $\omega>0$ 
(see \cite{BC} and also \cite{caz}). 

Next, we consider the double power case \eqref{nls} with $a>0$ and $b>0$. 
From Berestycki and Cazenave \cite{BC}, we see that 
if $1+4/N\le p<q<2^*-1$, 
then the standing wave solution $e^{i\omega t}\phi_{\omega}$ of \eqref{nls} 
is strongly unstable for all $\omega>0$ (see \cite{oht3} for the case $p=1+4/N<q$). 

On the other hand, 
when $1<p<1+4/N<q<2^*-1$, 
the standing wave solution $e^{i\omega t}\phi_{\omega}$ of \eqref{nls} 
is unstable for sufficiently large $\omega$ (see \cite{oht2}), 
while $e^{i\omega t}\phi_{\omega}$ is stable 
for sufficiently small $\omega$ (see \cite{fuk} 
and also \cite{oht1,mae} for more results in one dimensional case). 
However, it was not known 
whether $e^{i\omega t}\phi_{\omega}$ 
is strongly unstable or not 
for the case where $1<p<1+4/N<q<2^*-1$ and 
$\omega$ is sufficiently large. 

Now we state our main result in this paper. 

\begin{theorem}\label{thm1}
Let $a>0$, $b>0$, $1<p<1+4/N<q<2^*-1$, 
and let $\phi_{\omega}\in \mathcal{G}_{\omega}$. 
Then there exists $\omega_1>0$ such that 
the standing wave solution $e^{i\omega t}\phi_{\omega}$ of \eqref{nls} 
is strongly unstable for all $\omega\in (\omega_1,\infty)$. 
\end{theorem}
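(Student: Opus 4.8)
The plan is to use the classical Berestycki–Cazenave virial/variational argument, adapted to the double power nonlinearity in the regime where $\omega$ is large. The starting point is the observation that as $\omega\to\infty$, after the rescaling $\phi_\omega(x)=\omega^{1/(p-1)}\psi_\omega(\sqrt{\omega}\,x)$ (or the analogous scaling matched to the subcritical power $p$), the ground state equation \eqref{sp} converges, in an appropriate sense, to the single-power equation $-\Delta\psi+\psi-a|\psi|^{p-1}\psi=0$; since $p<1+4/N$ the limiting problem is $L^2$-subcritical and the $b$-term becomes a lower-order perturbation. The key quantitative consequence I would extract is a sign condition: for $\omega$ large, the ground state satisfies $P(\phi_\omega)<0$ (or $\le 0$ with the right strict inequality on the relevant functional), where $P$ is the virial functional $P(v)=\|\nabla v\|_{L^2}^2-\frac{aN(p-1)}{2(p+1)}\|v\|_{L^{p+1}}^{p+1}-\frac{bN(q-1)}{2(q+1)}\|v\|_{L^{q+1}}^{q+1}$, i.e. the derivative at $\lambda=1$ of $\lambda\mapsto E(\phi_\omega^\lambda)$ along the $L^2$-preserving dilation $\phi_\omega^\lambda(x)=\lambda^{N/2}\phi_\omega(\lambda x)$.

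The core of the argument then runs as follows. First I would show that $\phi_\omega$ is a minimizer of $E$ on the constraint $\{v\in H^1\setminus\{0\}: S_\omega'(v)v=0\}$ (Nehari-type) or, more usefully, characterize $d(\omega):=S_\omega(\phi_\omega)$ and relate it to a minimization of $E$ under the virial constraint $P(v)=0$ together with the Nehari constraint; this is the standard reduction showing that on the manifold $\{P(v)<0\}$ the action $S_\omega$ stays bounded below by $d(\omega)$ along the $L^2$- and charge-preserving flow. Second, for $\omega$ large so that $P(\phi_\omega)<0$, I construct the initial data $u_{0,\lambda}=\phi_\omega^\lambda$ with $\lambda>1$ close to $1$: one checks $\|u_{0,\lambda}-\phi_\omega\|_{H^1}\to 0$ as $\lambda\to1$, that $E(u_{0,\lambda})<E(\phi_\omega)$ and $\|u_{0,\lambda}\|_{L^2}=\|\phi_\omega\|_{L^2}$, and that $P(u_{0,\lambda})<0$. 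Third, an invariant-set argument: along the solution $u(t)$ with this data, conservation of energy and charge plus the bound $S_\omega(u(t))\ge d(\omega)$ on $\{P<0\}$ forces $P(u(t))$ to stay negative and in fact bounded away from $0$, giving $P(u(t))\le E(u(t))-E(\phi_\omega)<0$ uniformly (using a convexity/coercivity estimate of the form $\partial_\lambda^2 E(v^\lambda)|_{\lambda=1}\ge$ something times $P$, valid because both powers are $L^2$-supercritical-or-subcritical in the needed direction — here one uses $q>1+4/N$ crucially and absorbs the subcritical $p$-term). Finally, the virial identity $\frac{d^2}{dt^2}\||x|u(t)\|_{L^2}^2=8P(u(t))\le -c<0$ (for data with $|x|u_0\in L^2$; the non-weighted case is handled by a truncated virial / Ogawa–Tsutsumi argument) yields finite-time blowup.

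The main obstacle I anticipate is establishing the sign condition $P(\phi_\omega)<0$ for large $\omega$ rigorously, since $P(\phi_\omega)$ is not simply signed by the Pohozaev identities the way it is in the pure-power supercritical case: here the $p$-term contributes with the "wrong" sign (it is $L^2$-subcritical), so one genuinely needs the asymptotic analysis of $\phi_\omega$ as $\omega\to\infty$ to show that the $q$-term, which is $L^2$-supercritical, dominates. Concretely I would combine the Pohozaev identities for \eqref{sp},
\[
\|\nabla\phi_\omega\|_{L^2}^2 = \frac{aN(p-1)}{2(p+1)}\|\phi_\omega\|_{L^{p+1}}^{p+1}+\frac{bN(q-1)}{2(q+1)}\|\phi_\omega\|_{L^{q+1}}^{q+1} - \frac{\omega N}{2}\|\phi_\omega\|_{L^2}^2 \cdot(\text{coefficient}),
\]
with the rescaling estimates to show that, after dividing by the appropriate power of $\omega$, $P(\phi_\omega)$ has a strictly negative limit (or $\to-\infty$), hence is negative for $\omega>\omega_1$. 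A secondary technical point is passing from $|x|\phi_\omega\in L^2$ (true for ground states by exponential decay) and the perturbed data $u_{0,\lambda}$ to an actual blowup statement, and verifying that the perturbation $u_{0,\lambda}$ can be taken radial (or that radiality is not needed), so that the virial argument applies without weighted-space issues; this is routine given the decay of $\phi_\omega$, and I would handle the general (non-weighted, non-radial) case by the standard localized virial estimate as in \cite{oht2}.
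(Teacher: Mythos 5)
Your overall architecture (dilated data $\phi_\omega^\lambda$ with $\lambda>1$, an invariant set on which $P$ stays negative, the estimate $P(u(t))\le E(u_0)-E(\phi_\omega)<0$, and the virial identity) matches the paper, but two central steps are wrong or have gaps. First, the ``key quantitative sign condition'' you propose to extract for large $\omega$, namely $P(\phi_\omega)<0$, is false: combining the Nehari identity $K_\omega(\phi_\omega)=0$ with the Pohozaev identity for \eqref{sp} gives $P(\phi_\omega)=0$ for \emph{every} $\omega>0$ (the paper uses this fact explicitly), so no asymptotic analysis can make $P(\phi_\omega)$ strictly negative. The condition that actually becomes available for large $\omega$, and which the paper isolates (Lemma \ref{lemma5}), is $E(\phi_\omega)>0$; since $P(\phi_\omega)=0$, this is equivalent to
\begin{equation*}
\frac{(2-\alpha)a}{p+1}\|\phi_\omega\|_{L^{p+1}}^{p+1}
<\frac{(\beta-2)b}{q+1}\|\phi_\omega\|_{L^{q+1}}^{q+1},
\end{equation*}
and it is here that the asymptotics $\|\phi_\omega\|_{L^{p+1}}^{p+1}/\|\phi_\omega\|_{L^{q+1}}^{q+1}\to 0$ (as in \cite{oht2}) enter. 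With $E(\phi_\omega)>0$ and $P(\phi_\omega)=0$, the point $\lambda=1$ is the local maximum $\lambda_3$ of $\lambda\mapsto E(\phi_\omega^\lambda)$, which is what makes $E(\phi_\omega^\lambda)<E(\phi_\omega)$ and $P(\phi_\omega^\lambda)<0$ for $\lambda>1$ near $1$; your redirected asymptotic analysis is aimed at the wrong functional.

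Second, your invariant-set step rests on ``the action $S_\omega$ stays bounded below by $d(\omega)$ on the manifold $\{P(v)<0\}$'', which is essentially the Berestycki--Cazenave characterization \eqref{bc}; the paper points out that \eqref{bc} (and likewise the Zhang--Le Coz inequality \eqref{zl} taken over $\{P=0,\,K_\omega<0\}$ alone) fails precisely in the regime $1<p<1+4/N<q$, because the $L^2$-subcritical term destroys the variational characterization of $d(\omega)$ by the constraint $P=0$. The paper's remedy is to work in the smaller set $\mathcal{B}_\omega$ defined by the four conditions $0<E(v)<E(\phi_\omega)$, $\|v\|_{L^2}^2=\|\phi_\omega\|_{L^2}^2$, $P(v)<0$, $K_\omega(v)<0$, and to prove the key Lemma \ref{lemma2}: if $E(v)>0$, $K_\omega(v)<0$ and $P(v)=0$, then $d(\omega)<S_\omega(v)$; the mass constraint then converts the action comparison into the energy comparison $E(\phi_\omega)\le E(v)-P(v)$ (Lemma \ref{EP}) via the concavity of $\lambda\mapsto E(v^\lambda)$ beyond $\lambda_3$. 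Without the additional constraints $K_\omega<0$, fixed $L^2$ norm, and $E>0$, the flow-invariance and the lower bound you invoke do not follow in this mixed sub/supercritical setting, so as written this step is a genuine gap. (Your final technical worry about weighted data is moot: $\phi_\omega^\lambda$ decays exponentially, so it lies in $\Sigma$ and the plain virial identity suffices.)
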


For $\omega>0$, 
we define functionals $S_{\omega}$ and $K_{\omega}$ on $H^1(\R^N)$ by
\begin{align*}
&S_{\omega}(v)=\frac{1}{2}\|\nabla v\|_{L^2}^2
+\frac{\omega}{2}\|v\|_{L^2}^2
-\frac{a}{p+1}\|v\|_{L^{p+1}}^{p+1}
-\frac{b}{q+1}\|v\|_{L^{q+1}}^{q+1}, \\
&K_{\omega}(v)=\|\nabla v\|_{L^2}^2
+\omega \|v\|_{L^2}^2
-a\|v\|_{L^{p+1}}^{p+1}
-b\|v\|_{L^{q+1}}^{q+1}.
\end{align*}
Note that  \eqref{sp} is equivalent to $S_{\omega}'(\phi)=0$, and 
\begin{align*}
K_{\omega}(v)=\p_{\lambda} S_{\omega}(\lambda v)\big|_{\lambda=1}
=\langle S_{\omega}'(v), v \rangle
\end{align*}
is the so-called Nehari functional. 
We denote the set of nontrivial solutions of \eqref{sp} by 
$$\mathcal{A}_{\omega}
=\{v\in H^1(\R^N): S_{\omega}'(v)=0, ~ v\ne 0\},$$ 
and define the set of ground states of \eqref{sp} by 
\begin{align}
\mathcal{G}_{\omega}
=\{\phi\in \mathcal{A}_{\omega}:
S_{\omega}(\phi)\le S_{\omega}(v)
\hspace{1mm} \mbox{for all} \hspace{1mm} 
v\in \mathcal{A}_{\omega}\}. 
\label{gs}
\end{align}
Moreover, consider the minimization problem:
\begin{align}
d(\omega)
=\inf\{S_{\omega}(v):v\in H^1(\R^N), ~ K_{\omega}(v)=0, ~ v\ne 0\}.
\label{gs1}
\end{align}
Then, it is well known that $\mathcal{G}_{\omega}$ is characterized as follows. 
\begin{align}
\mathcal{G}_{\omega}
=\{\phi\in H^1(\R^N): 
S_{\omega}(\phi)=d(\omega), ~ K_{\omega}(\phi)=0\}. 
\label{gs2} 
\end{align}

The proof of finite time blowup for \eqref{nls} relies on the virial identity \eqref{virial}. 
If $u_0\in \Sigma:=\{v\in H^1(\R^N): |x|v\in L^2(\R^N)\}$, 
then the solution $u(t)$ of \eqref{nls} with $u(0)=u_0$ 
belongs to $C([0,T^*),\Sigma)$, and satisfies 
\begin{align} \label{virial}
\frac{d^2}{dt^2}\|xu(t)\|_{L^2}^2=8P(u(t))
\end{align}
for all $t\in [0,T^*)$, where 
\begin{align*}
P(v)=\|\nabla v\|_{L^2}^2
-\frac{a\alpha}{p+1}\|v\|_{L^{p+1}}^{p+1}
-\frac{b\beta}{q+1}\|v\|_{L^{q+1}}^{q+1}
\end{align*}
with $\alpha=\dfrac{N}{2}(p-1)$, $\beta=\dfrac{N}{2}(q-1)$ 
(see, e.g., \cite{caz}). 

Note that for the scaling $v^{\lambda}(x)=\lambda^{N/2}v(\lambda x)$ for $\lambda>0$, 
we have 
\begin{align*}
&\|\nabla v^{\lambda}\|_{L^2}^2=\lambda^2\|\nabla v\|_{L^2}^2, ~ 
\|v^{\lambda}\|_{L^{p+1}}^{p+1}=\lambda^{\alpha}\|v\|_{L^{p+1}}^{p+1}, ~ 
\|v^{\lambda}\|_{L^{q+1}}^{q+1}=\lambda^{\beta}\|v\|_{L^{q+1}}^{q+1}, \\
&\|v^{\lambda}\|_{L^2}^2=\|v\|_{L^2}^2, \quad
P(v)=\p_{\lambda} E(v^{\lambda})\big|_{\lambda=1}. 
\end{align*}

The method of Berestycki and Cazenave \cite{BC} is based on the fact that 
$d(\omega)=S_{\omega}(\phi_{\omega})$ can be characterized as 
\begin{align}
d(\omega)
=\inf\{S_{\omega}(v):v\in H^1(\R^N), ~ P(v)=0, ~ v\ne 0\}
\label{bc}
\end{align}
for the case $1+4/N\le p<q<2^*-1$. 
Using this fact, it is proved in \cite{BC} that 
if $u_0\in \Sigma \cap \mathcal{B}_{\omega}^{BC}$ 
then the solution $u(t)$ of \eqref{nls} with $u(0)=u_0$ blows up in finite time, 
where 
$$\mathcal{B}_{\omega}^{BC}=
\{v\in H^1(\R^N):S_{\omega}(v)<d(\omega),~ P(v)<0\}.$$
We remark that \eqref{bc} 
does not hold for the case $1<p<1+4/N<q<2^*-1$. 

On the other hand, 
Zhang \cite{zhang} and Le Coz \cite{lec} gave an alternative proof of the result 
of Berestycki and Cazenave \cite{BC}. 
Instead of \eqref{bc}, they proved that  
\begin{align}
d(\omega)\le \inf\{S_{\omega}(v):
v\in H^1(\R^N), ~ P(v)=0, ~ K_{\omega}(v)<0\}
\label{zl}
\end{align}
holds for all $\omega>0$ if $1+4/N\le p<q<2^*-1$ 
(compare with Lemma \ref{lemma2} below). 
Using this fact, it is proved in \cite{zhang,lec} that 
if $u_0\in \Sigma \cap \mathcal{B}_{\omega}^{ZL}$ 
then the solution $u(t)$ of \eqref{nls} with $u(0)=u_0$ blows up in finite time, 
where 
$$\mathcal{B}_{\omega}^{ZL}=
\{v\in H^1(\R^N):S_{\omega}(v)<d(\omega),~ P(v)<0, ~ K_{\omega}(v)<0\}.$$

In this paper, we use and modify the idea of Zhang \cite{zhang} and Le Coz \cite{lec} 
to prove Theorem \ref{thm1}. 
For $\omega>0$ with $E(\phi_{\omega})>0$, 
we introduce 
\begin{align}
\mathcal{B}_{\omega}=\{v\in H^1(\R^N): 
0<E(v)<E(\phi_{\omega}), &~ 
\|v\|_{L^2}^2=\|\phi_{\omega}\|_{L^2}^2, 
\label{B} \\
&P(v)<0, ~ K_{\omega}(v)<0\}. 
\nonumber 
\end{align}
Then we have the following. 

\begin{theorem}\label{thm2}
Let $a>0$, $b>0$, $1<p<1+4/N<q<2^*-1$, 
and assume that $\phi_{\omega}\in \mathcal{G}_{\omega}$ satisfies $E(\phi_{\omega})>0$. 
If $u_0\in \Sigma \cap \mathcal{B}_{\omega}$, 
then the solution $u(t)$ of \eqref{nls} with $u(0)=u_0$ blows up in finite time. 
\end{theorem}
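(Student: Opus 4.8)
The plan is to follow the Payne--Sattinger / Zhang--Le Coz scheme, adapted to the constrained set $\mathcal{B}_\omega$. There are three ingredients: (1) $\mathcal{B}_\omega$ is invariant under the flow of \eqref{nls}; (2) $P(u(t))$ stays bounded away from $0$ along the trajectory; (3) the virial identity \eqref{virial} then forces finite-time blowup. The point of the extra constraint $\|v\|_{L^2}^2=\|\phi_\omega\|_{L^2}^2$ in \eqref{B}, together with $E(\phi_\omega)>0$, is that it lets one pass back and forth between $S_\omega$ (and $d(\omega)$) and $E$; this constraint is preserved both by the flow (by \eqref{conservation}) and by the scaling $v^\lambda(x)=\lambda^{N/2}v(\lambda x)$, which is essential below.

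\medskip
\noindent\textit{Step 1: $\mathcal{B}_\omega$ is invariant.} Let $u_0\in\Sigma\cap\mathcal{B}_\omega$ and $u\in C([0,T^*),\Sigma)$ the solution. By \eqref{conservation}, $E(u(t))=E(u_0)\in(0,E(\phi_\omega))$ and $\|u(t)\|_{L^2}^2=\|\phi_\omega\|_{L^2}^2$ for all $t$; in particular $u(t)\ne0$ and, using $S_\omega(\phi_\omega)=d(\omega)$ from \eqref{gs2}, $S_\omega(u(t))=E(u(t))+\tfrac{\omega}{2}\|\phi_\omega\|_{L^2}^2<d(\omega)$. If $u(t)$ left $\mathcal{B}_\omega$, then by continuity of $t\mapsto(P(u(t)),K_\omega(u(t)))$ there would be a first time $t_1\in(0,T^*)$ with $P(u(t_1))\le0$, $K_\omega(u(t_1))\le0$, and one of them equal to $0$. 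If $K_\omega(u(t_1))=0$, then $u(t_1)$ is admissible in \eqref{gs1}, so $S_\omega(u(t_1))\ge d(\omega)$ --- contradiction. If $K_\omega(u(t_1))<0$ and $P(u(t_1))=0$, then $u(t_1)$ satisfies $P=0$, $K_\omega<0$ and the $L^2$ constraint, so Lemma \ref{lemma2} gives $S_\omega(u(t_1))\ge d(\omega)$ --- contradiction. Hence $u(t)\in\mathcal{B}_\omega$ for all $t\in[0,T^*)$.

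\medskip
\noindent\textit{Step 2: $\sup_t P(u(t))<0$.} This is where I expect the main difficulty to lie, since for $1<p<1+4/N$ the exponent $\alpha=\tfrac{N}{2}(p-1)$ is $<2$, so $\lambda\mapsto P(u(t)^\lambda)$ is not monotone and one cannot conclude as directly as Zhang and Le Coz in the case $1+4/N\le p$. Fix $t$ and put $g(\lambda)=E(u(t)^\lambda)$; since $\lambda g'(\lambda)=P(u(t)^\lambda)$ we have $g(1)=E(u_0)>0$, $g'(1)=P(u(t))<0$, $g(0^+)=0$ and $g(\lambda)\to-\infty$ as $\lambda\to\infty$ (because $\beta=\tfrac{N}{2}(q-1)>2$). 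Because $g$ rises from $0$ to the positive value $g(1)$ and is decreasing at $\lambda=1$, it has a last critical point $\lambda_*\in(0,1)$, at which $P(u(t)^{\lambda_*})=0$ and $g(\lambda_*)>g(1)$. On the other hand $K_\omega(u(t)^\lambda)\to\omega\|\phi_\omega\|_{L^2}^2>0$ as $\lambda\to0^+$ while $K_\omega(u(t))<0$, so $\lambda\mapsto K_\omega(u(t)^\lambda)$ vanishes somewhere in $(0,1)$; comparing that zero with $\lambda_*$ and applying \eqref{gs1} (at a zero of $K_\omega$) or Lemma \ref{lemma2} (if $K_\omega(u(t)^{\lambda_*})<0$), together with the fact that $u(t)^{\lambda_*}$ has the right $L^2$-norm, one obtains $g(\lambda_*)\ge E(\phi_\omega)$. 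The remaining --- and most delicate --- point is to quantify how fast $g$ can drop from $g(\lambda_*)$ to $g(1)$, i.e.\ to control $g(\lambda_*)-g(1)$ in terms of $(1-\lambda_*)\,|P(u(t))|$ via the sign of $g''$ on $[\lambda_*,1]$; this gives an estimate of the form $P(u(t))\le c\,(E(u_0)-E(\phi_\omega))$ with a constant $c>0$ independent of $t$, whence $\sup_{0\le t<T^*}P(u(t))\le c\,(E(u_0)-E(\phi_\omega))<0$. It is precisely here that the hypotheses $1<p<1+4/N<q$ and $E(\phi_\omega)>0$ are genuinely used.

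\medskip
\noindent\textit{Step 3: blowup.} Let $J(t)=\|xu(t)\|_{L^2}^2\ge0$; since $u_0\in\Sigma$, $J\in C^2([0,T^*))$ and by \eqref{virial} and Step 2, $J''(t)=8P(u(t))\le-\delta<0$ for all $t$, with $\delta>0$. If $T^*=\infty$, then $J(t)\le J(0)+J'(0)t-\tfrac{\delta}{2}t^2\to-\infty$, contradicting $J\ge0$. Hence $T^*<\infty$, i.e.\ $u$ blows up in finite time.
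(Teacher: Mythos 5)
Your proposal follows essentially the same route as the paper: your Step 1 is Lemma \ref{invariant}, your Step 3 is the concluding virial argument, and the estimate you sketch in Step 2 is precisely the paper's Lemma \ref{EP}, which gives $P(v)\le E(v)-E(\phi_\omega)$ for $v\in\mathcal{B}_\omega$ (so your constant is $c=1$). The one point you leave unexecuted---the sign of $g''$ on $[\lambda_*,1]$---does hold (it is the paper's assertion that $\partial_\lambda^2 E(v^\lambda)<0$ on $[\lambda_3,\infty)$, read off from the shape of the graph in Lemma \ref{lemma1}), and with it the tangent-line bound $E(v^{\lambda_*})\le E(v)+(\lambda_*-1)P(v)\le E(v)-P(v)$ finishes your Step 2 exactly as in the paper.
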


\begin{rem}
Our method is not restricted to the double power case \eqref{nls}, 
but is also applicable to other type of nonlinear Schr\"odinger equations. 
For example, we consider nonlinear Schr\"odinger equation with a delta function potential: 
\begin{align}\label{delta}
i\p_t u=-\p_x^2 u-\gamma \delta (x)u-|u|^{q-1}u,
\quad (t,x)\in \R\times \R, 
\end{align}
where $\delta(x)$ is the Dirac measure at the origin, $\gamma>0$ and $1<q<\infty$. 
The energy of \eqref{delta} is given by 
$$E(v)
=\frac{1}{2}\|\p_x v\|_{L^2}^2
-\frac{\gamma}{2} |v(0)|^2
-\frac{1}{q+1}\|v\|_{L^{q+1}}^{q+1}.$$
The standing wave solution  $e^{i\omega t}\phi_{\omega}(x)$ of \eqref{delta} 
exists for $\omega\in (\gamma^2/4,\infty)$. 

For the case $q>5$, 
it is proved in \cite{FOO} that 
there exists $\omega_2\in (\gamma^2/4,\infty)$ such that 
the standing wave solution $e^{i\omega t}\phi_{\omega}(x)$ of \eqref{delta} is stable 
for $\omega\in (\gamma^2/4,\omega_2)$, 
and it is unstable for $\omega\in (\omega_2,\infty)$. 
Since the graph of the function 
$$E(v^{\lambda})
=\frac{\lambda^2}{2}\|\p_x v\|_{L^2}^2
-\frac{\gamma \lambda}{2} |v(0)|^2
-\frac{\lambda^{\beta}}{q+1}\|v\|_{L^{q+1}}^{q+1}$$
with $\beta=\dfrac{q-1}{2}>2$ 
has the same properties as in Lemma \ref{lemma1} for \eqref{nls}, 
we can prove that 
the standing wave solution $e^{i\omega t}\phi_{\omega}(x)$ of \eqref{delta} is 
strongly unstable for $\omega$ satisfying $E(\phi_{\omega})>0$ 
(see also Theorem 5 of \cite{LFF} for the case $\gamma<0$). 
\end{rem}

The rest of the paper is organized as follows. 
In Section 2, we give the proof of Theorem \ref{thm2}. 
In Section 3, we show that $E(\phi_{\omega})>0$ for sufficiently large $\omega$, 
and prove Theorem \ref{thm1} using Theorem \ref{thm2}.

\section{Proof of Theorem \ref{thm2}}

Throughout this section, we assume that 
$$a>0, \quad b>0, \quad 1<p<1+4/N<q<2^*-1, \quad  E(\phi_{\omega})>0.$$
Recall that $0<\alpha=\dfrac{N}{2}(p-1)<2<\beta=\dfrac{N}{2}(q-1)$, and 
\begin{align}
&E(v^{\lambda})
=\frac{\lambda^2}{2}\|\nabla v\|_{L^2}^2
-\frac{a \lambda^{\alpha}}{p+1}\|v\|_{L^{p+1}}^{p+1}
-\frac{b \lambda^{\beta}}{q+1}\|v\|_{L^{q+1}}^{q+1}, 
\label{r1}\\
&P(v^{\lambda})
=\lambda^2 \|\nabla v\|_{L^2}^2
-\frac{a\alpha \lambda^{\alpha}}{p+1}\|v\|_{L^{p+1}}^{p+1}
-\frac{b\beta \lambda^{\beta}}{q+1}\|v\|_{L^{q+1}}^{q+1} 
=\lambda \p_{\lambda} E(v^{\lambda}), 
\label{r2} \\
&K_{\omega}(v^{\lambda})
=\lambda^2\|\nabla v\|_{L^2}^2
+\omega \|v\|_{L^2}^2
-\lambda^{\alpha}a\|v\|_{L^{p+1}}^{p+1}
-\lambda^{\beta}b\|v\|_{L^{q+1}}^{q+1}. 
\label{r3}
\end{align}

\begin{lemma}\label{lemma1}
If $v\in H^1(\R^N)$ satisfies $E(v)>0$, 
then there exist $\lambda_k=\lambda_k(v)$ $(k=1,2,3,4)$ such that 
$0<\lambda_1<\lambda_2<\lambda_3<\lambda_4$ and 
\begin{itemize}
\item $E(v^{\lambda})$ is decreasing in $(0,\lambda_1)\cup (\lambda_3,\infty)$, 
and increasing in $(\lambda_1,\lambda_3)$. 
\item $E(v^{\lambda})$ is negative in $(0,\lambda_2)\cup (\lambda_4,\infty)$, 
and positive in $(\lambda_2,\lambda_4)$. 
\item $E(v^{\lambda})<E(v^{\lambda_3})$ 
for all $\lambda\in (0,\lambda_3)\cup (\lambda_3,\infty)$.  
\end{itemize}
\end{lemma}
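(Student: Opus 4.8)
The plan is to treat the statement as a one–variable calculus fact about the function $f(\lambda):=E(v^{\lambda})$. By \eqref{r1} we may write
\[
f(\lambda)=c_1\lambda^2-c_2\lambda^{\alpha}-c_3\lambda^{\beta},\qquad \lambda>0,
\]
with $c_1=\tfrac12\|\nabla v\|_{L^2}^2>0$, $c_2=\tfrac{a}{p+1}\|v\|_{L^{p+1}}^{p+1}>0$, $c_3=\tfrac{b}{q+1}\|v\|_{L^{q+1}}^{q+1}>0$, and $0<\alpha<2<\beta$; by hypothesis $f(1)=E(v)>0$. First I would record the boundary behaviour. Since $\alpha$ is the smallest of the three exponents, $f(\lambda)\to 0^-$ as $\lambda\to 0^+$ (indeed $f(\lambda)\sim-c_2\lambda^{\alpha}<0$ for small $\lambda$), and since $\beta$ is the largest, $f(\lambda)\to-\infty$ as $\lambda\to\infty$. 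In the same way, in $f'(\lambda)=2c_1\lambda-\alpha c_2\lambda^{\alpha-1}-\beta c_3\lambda^{\beta-1}$ the term with exponent $\alpha-1$ is the dominant one near $0$ and the one with exponent $\beta-1$ is dominant near $+\infty$ (both have negative coefficient), so $f'<0$ on a right neighbourhood of $0$ and on a neighbourhood of $+\infty$.

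The core of the proof is to count the critical points of $f$. For $\lambda>0$, $f'(\lambda)=0$ is equivalent to $g(\lambda):=\alpha c_2\lambda^{\alpha-2}+\beta c_3\lambda^{\beta-2}=2c_1$. Since $\alpha-2<0<\beta-2$, a short computation with $g'$ (multiply by $\lambda^{3-\alpha}$ to get a strictly increasing function passing from a negative value to $+\infty$) shows that $g$ is strictly decreasing on $(0,\lambda_*)$ and strictly increasing on $(\lambda_*,\infty)$ for a unique $\lambda_*>0$, with $g\to+\infty$ at both ends; hence the equation $g(\lambda)=2c_1$ has \emph{at most two} roots, i.e.\ $f$ has at most two critical points. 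It has at least two: if $f'$ had at most one zero, then, being negative near $0$ and near $+\infty$, it would satisfy $f'\le 0$ on all of $(0,\infty)$, so $f$ would be non-increasing; together with $f(\lambda)\to 0^-$ as $\lambda\to 0^+$ this would force $f\le 0$ on $(0,\infty)$, contradicting $f(1)=E(v)>0$. Therefore $f'$ has exactly two zeros $\lambda_1<\lambda_3$, and from the signs of $f'$ near $0$ and $\infty$ we get $f'<0$ on $(0,\lambda_1)$, $f'>0$ on $(\lambda_1,\lambda_3)$, $f'<0$ on $(\lambda_3,\infty)$; this is the first assertion, and $\lambda_1$ (resp.\ $\lambda_3$) is a local minimum (resp.\ maximum) of $f$.

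The remaining two assertions are then read off. On $(0,\lambda_1)$, $f$ is strictly decreasing with $f(0^+)=0$, so $f<0$ on $(0,\lambda_1]$ and in particular $f(\lambda_1)<0$. Since $f$ increases on $(\lambda_1,\lambda_3)$ and decreases on $(\lambda_3,\infty)$, the point $\lambda_3$ is the \emph{strict} global maximizer of $f$ on $(0,\infty)$: for $\lambda\in(0,\lambda_1]$ one has $f(\lambda)<0<f(1)\le f(\lambda_3)$ (using $f(1)>0$), and strict monotonicity handles $\lambda\in(\lambda_1,\lambda_3)\cup(\lambda_3,\infty)$; this is the third assertion, and it also gives $f(\lambda_3)>0$. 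Finally, $f$ increases continuously from $f(\lambda_1)<0$ to $f(\lambda_3)>0$ on $[\lambda_1,\lambda_3]$, hence vanishes at a unique $\lambda_2\in(\lambda_1,\lambda_3)$; and $f$ decreases continuously from $f(\lambda_3)>0$ to $-\infty$ on $[\lambda_3,\infty)$, hence vanishes at a unique $\lambda_4\in(\lambda_3,\infty)$. Combining with $f<0$ on $(0,\lambda_1]$ gives $f<0$ on $(0,\lambda_2)\cup(\lambda_4,\infty)$ and $f>0$ on $(\lambda_2,\lambda_4)$, which is the second assertion, while the ordering $0<\lambda_1<\lambda_2<\lambda_3<\lambda_4$ is immediate from the construction. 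The one step that genuinely uses the hypothesis $E(v)>0$ — and which I expect to be the only point requiring care — is the ``at least two critical points'' claim; the rest is routine analysis of a sum of three powers with exponents straddling $2$.
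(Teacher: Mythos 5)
Your proof is correct and follows essentially the same route as the paper, which simply asserts that the conclusion "is easily verified by drawing the graph" of $\lambda\mapsto E(v^{\lambda})$ (with a figure); your argument rigorously supplies the details behind that picture, in particular the count of exactly two critical points via the unimodal auxiliary function $g(\lambda)=\alpha c_2\lambda^{\alpha-2}+\beta c_3\lambda^{\beta-2}$ and the use of $E(v)>0$ to rule out $f'\le 0$.
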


\begin{proof}
Since $a>0$, $b>0$, $0<\alpha<2<\beta$ and $E(v)>0$, 
the conclusion is easily verified by drawing the graph of \eqref{r1} 
(see Figure 1 below). 
\end{proof}

\centerline{\includegraphics[scale=0.8]{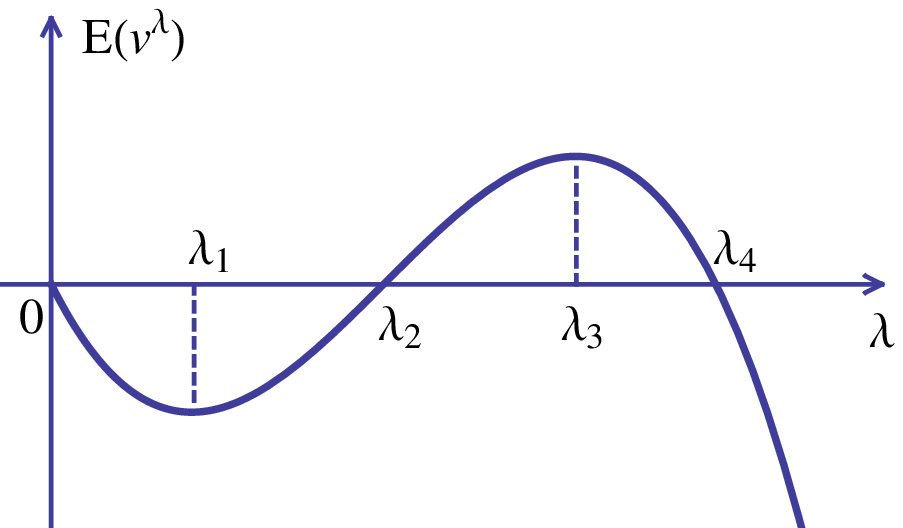} \hspace{5mm} 
Figure 1}

\begin{lemma}\label{lemma2}
If $v\in H^1(\R^N)$ satisfies 
$E(v)>0$, $K_{\omega}(v)<0$ and $P(v)=0$, 
then $d(\omega)<S_{\omega}(v)$. 
\end{lemma}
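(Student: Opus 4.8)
The plan is to exploit the $L^2$-invariant scaling $v^{\lambda}(x)=\lambda^{N/2}v(\lambda x)$ and produce, from $v$, an admissible competitor for the minimization problem \eqref{gs1} defining $d(\omega)$. Since $\|v^{\lambda}\|_{L^2}=\|v\|_{L^2}$ is independent of $\lambda$, one has $S_{\omega}(v^{\lambda})=E(v^{\lambda})+\frac{\omega}{2}\|v\|_{L^2}^2$, so comparing values of $S_{\omega}$ along the curve $\lambda\mapsto v^{\lambda}$ reduces to comparing values of $E(v^{\lambda})$, which is precisely the function whose shape is recorded in Lemma \ref{lemma1}.

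First I would locate a scaling parameter at which the Nehari constraint holds. From \eqref{r3}, $K_{\omega}(v^{\lambda})\to\omega\|v\|_{L^2}^2>0$ as $\lambda\to 0^+$ (the three remaining terms carry positive powers of $\lambda$, since $\alpha,\beta>0$), while $K_{\omega}(v^1)=K_{\omega}(v)<0$ by hypothesis. By the intermediate value theorem there is $\lambda_0\in(0,1)$ with $K_{\omega}(v^{\lambda_0})=0$. Since $K_{\omega}(v)<0$ forces $v\ne 0$, also $v^{\lambda_0}\ne 0$, and therefore $d(\omega)\le S_{\omega}(v^{\lambda_0})$ by \eqref{gs1}.

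The crux is to show $S_{\omega}(v^{\lambda_0})<S_{\omega}(v)$, equivalently $E(v^{\lambda_0})<E(v)$, and this is where the hypothesis $P(v)=0$ enters. By \eqref{r2}, $P(v^{\lambda})=\lambda\,\partial_{\lambda}E(v^{\lambda})$, so $P(v)=0$ says that $\lambda=1$ is a critical point of $\lambda\mapsto E(v^{\lambda})$. Since $E(v)>0$, Lemma \ref{lemma1} applies, and its monotonicity statement shows that the only critical points of $E(v^{\lambda})$ on $(0,\infty)$ are the local minimum at $\lambda_1$ and the global maximum at $\lambda_3$. Because $0<\lambda_1<\lambda_2$ and $E(v^{\lambda})<0$ on $(0,\lambda_2)$, we have $E(v^{\lambda_1})<0<E(v)$, so $1\ne\lambda_1$; hence $1=\lambda_3$. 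By the last assertion of Lemma \ref{lemma1}, $E(v^{\lambda})<E(v^{\lambda_3})=E(v)$ for every $\lambda\ne 1$, in particular for $\lambda=\lambda_0\in(0,1)$.

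Putting the pieces together, $d(\omega)\le S_{\omega}(v^{\lambda_0})=E(v^{\lambda_0})+\frac{\omega}{2}\|v\|_{L^2}^2<E(v)+\frac{\omega}{2}\|v\|_{L^2}^2=S_{\omega}(v)$, which is the claim. The genuinely substantive step is the identification $\lambda=1=\lambda_3$, i.e. the observation that under $E(v)>0$ and $P(v)=0$ the function $v$ sits at the \emph{maximum} of its own scaling curve rather than at the minimum; everything else (the intermediate value argument producing $\lambda_0$, and the strictness of the final comparison) is routine once Lemma \ref{lemma1} is available. I do not anticipate using the full strength of Lemma \ref{lemma1} — only the monotonicity picture and the strict global maximality of $\lambda_3$ — but having the complete description available is convenient for handling the sign bookkeeping cleanly.
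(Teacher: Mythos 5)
Your proposal is correct and follows essentially the same route as the paper: produce $\lambda_0\in(0,1)$ with $K_{\omega}(v^{\lambda_0})=0$ by the intermediate value theorem, then use $P(v)=0$, $E(v)>0$ and Lemma \ref{lemma1} to identify $\lambda=1$ with $\lambda_3$, so that $E(v^{\lambda_0})<E(v)$ and hence $d(\omega)\le S_{\omega}(v^{\lambda_0})<S_{\omega}(v)$. Your extra remark ruling out $1=\lambda_1$ via $E(v^{\lambda_1})<0<E(v)$ just makes explicit what the paper leaves to the graph in Lemma \ref{lemma1}.
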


\begin{proof}
We consider two functions 
$f(\lambda)=K_{\omega}(v^{\lambda})$ and $g(\lambda)=E(v^{\lambda})$. 

Since $f(0)=\omega \|v\|_{L^2}^2>0$ and $f(1)=K_{\omega}(v)<0$, 
there exists $\lambda_0\in (0,1)$ such that 
$K_{\omega}(v^{\lambda_0})=0$. 
Moreover, since $v^{\lambda_0}\ne 0$, it follows from \eqref{gs1} that 
\begin{align*}
d(\omega)\le S_{\omega}(v^{\lambda_0}).
\end{align*}

On the other hand, 
since $g'(1)=P(v)=0$ and $g(1)=E(v)>0$, 
it follows from Lemma \ref{lemma1} that 
$\lambda_3=1$ and $g(\lambda)<g(1)$ for all $\lambda\in (0,1)$. 

Thus, we have 
$E(v^{\lambda_0})<E(v)$, and 
\begin{align*}
d(\omega)\le S_{\omega}(v^{\lambda_0}) 
=E(v^{\lambda_0})+\frac{\omega}{2}\|v^{\lambda_0}\|_{L^2}^2
<E(v)+\frac{\omega}{2}\|v\|_{L^2}^2=S_{\omega}(v). 
\end{align*}
This completes the proof. 
\end{proof}

\begin{lemma}\label{invariant}
The set $\mathcal{B}_{\omega}$ is invariant under the flow of \eqref{nls}. 
That is, if $u_0\in \mathcal{B}_{\omega}$, 
then the solution $u(t)$ of \eqref{nls} with $u(0)=u_0$ satisfies 
$u(t)\in \mathcal{B}_{\omega}$ for all $t\in [0,T^*)$. 
\end{lemma}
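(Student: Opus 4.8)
The plan is to combine the conservation laws \eqref{conservation}, the variational characterization \eqref{gs1} of $d(\omega)$, and Lemma \ref{lemma2} in a continuity argument. First I would record the one consequence of $\phi_{\omega}\in\mathcal{G}_{\omega}$ that drives everything: by \eqref{gs2} we have $S_{\omega}(\phi_{\omega})=d(\omega)$, so for any $v\in\mathcal{B}_{\omega}$, writing $S_{\omega}(v)=E(v)+\tfrac{\omega}{2}\|v\|_{L^2}^2$ and using $E(v)<E(\phi_{\omega})$ together with $\|v\|_{L^2}^2=\|\phi_{\omega}\|_{L^2}^2$ yields the strict inequality $S_{\omega}(v)<S_{\omega}(\phi_{\omega})=d(\omega)$. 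Now let $u(t)$ be the solution with $u(0)=u_0\in\mathcal{B}_{\omega}$. Since $E$ and $\|\cdot\|_{L^2}^2$ are conserved, we get $0<E(u(t))=E(u_0)<E(\phi_{\omega})$, $\|u(t)\|_{L^2}^2=\|\phi_{\omega}\|_{L^2}^2$, and hence $S_{\omega}(u(t))=S_{\omega}(u_0)<d(\omega)$ for all $t\in[0,T^*)$. In particular the only two conditions in the definition \eqref{B} of $\mathcal{B}_{\omega}$ that are not automatically propagated are $P(u(t))<0$ and $K_{\omega}(u(t))<0$.

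Next I would argue by contradiction. Suppose $u(t)\notin\mathcal{B}_{\omega}$ for some $t$, and put
$$t_0=\inf\{t\in[0,T^*): P(u(t))\ge 0 \text{ or } K_{\omega}(u(t))\ge 0\}.$$
Since $u_0\in\mathcal{B}_{\omega}$ and the functionals $P,K_{\omega}$ are continuous on $H^1(\R^N)$ while $t\mapsto u(t)$ is continuous into $H^1(\R^N)$, the conditions $P<0$, $K_{\omega}<0$ hold on a neighbourhood of $t=0$, so $0<t_0<T^*$; for $t\in[0,t_0)$ we have $u(t)\in\mathcal{B}_{\omega}$, and letting $t\uparrow t_0$ gives $P(u(t_0))\le 0$ and $K_{\omega}(u(t_0))\le 0$. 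Moreover, at least one of these is an equality, for otherwise $u$ would still lie in $\mathcal{B}_{\omega}$ for $t$ slightly beyond $t_0$, contradicting the definition of $t_0$. Note also that $u(t_0)\ne 0$ since $\|u(t_0)\|_{L^2}^2=\|\phi_{\omega}\|_{L^2}^2>0$.

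I would then dispose of the two cases. If $K_{\omega}(u(t_0))=0$, then $u(t_0)$ is a nonzero element of $H^1(\R^N)$ with $K_{\omega}(u(t_0))=0$, so \eqref{gs1} gives $d(\omega)\le S_{\omega}(u(t_0))<d(\omega)$, a contradiction; hence $K_{\omega}(u(t_0))<0$, and therefore necessarily $P(u(t_0))=0$. But then $u(t_0)$ satisfies $E(u(t_0))>0$, $K_{\omega}(u(t_0))<0$ and $P(u(t_0))=0$, so Lemma \ref{lemma2} applies and gives $d(\omega)<S_{\omega}(u(t_0))<d(\omega)$, again a contradiction. Hence no such $t_0$ exists, and $u(t)\in\mathcal{B}_{\omega}$ for all $t\in[0,T^*)$. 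There is no deep obstacle here; the only point requiring care is the bookkeeping at the exit time $t_0$ — specifically, one must rule out the borderline case $K_{\omega}(u(t_0))=0$ directly from the definition of $d(\omega)$ before invoking Lemma \ref{lemma2}, since Lemma \ref{lemma2} requires the strict inequality $K_{\omega}<0$.
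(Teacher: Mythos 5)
Your argument is correct and uses essentially the same ingredients and logic as the paper's proof: the conservation laws \eqref{conservation} force $S_{\omega}(u(t))<d(\omega)$, the characterization \eqref{gs1} rules out the borderline case $K_{\omega}=0$, and Lemma \ref{lemma2} rules out $P=0$ once $K_{\omega}<0$ is known. The only cosmetic difference is bookkeeping: the paper first proves $K_{\omega}(u(t))<0$ for all $t$ by an intermediate-value argument and then treats $P$ separately, whereas you use a single first exit time and split into the same two cases there.
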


\begin{proof}
Let $u_0\in \mathcal{B}_{\omega}$ and 
let $u(t)$ be the solution of \eqref{nls} with $u(0)=u_0$. 
Then, by the conservation laws \eqref{conservation}, 
we have 
$$0<E(u(t))=E(u_0)<E(\phi_{\omega}), \quad 
\|u(t)\|_{L^2}^2=\|u_0\|_{L^2}^2=\|\phi_{\omega}\|_{L^2}^2$$
for all $t\in [0,T^*)$. 

Next, we prove that $K_{\omega}(u(t))<0$ for all $t\in [0,T^*)$. 
Suppose that this were not true. 
Then, since $K_{\omega}(u_0)<0$ and 
$t\mapsto K_{\omega}(u(t))$ is continuous on $[0,T^*)$, 
there exists $t_1\in (0,T^*)$ such that 
$K_{\omega}(u(t_1))=0$. 
Moreover, since $u(t_1)\ne 0$, by \eqref{gs1}, we have 
$d(\omega)\le S_{\omega}(u(t_1))$. 
Thus, we have 
\begin{align*}
d(\omega)\le S_{\omega}(u(t_1))
=E(u_0)+\frac{\omega}{2}\|u_0\|_{L^2}^2
<E(\phi_{\omega})+\frac{\omega}{2}\|\phi_{\omega}\|_{L^2}^2=d(\omega). 
\end{align*}
This is a contradiction. 
Therefore, $K_{\omega}(u(t))<0$ for all $t\in [0,T^*)$. 

Finally, we prove that $P(u(t))<0$ for all $t\in [0,T^*)$. 
Suppose that this were not true. 
Then, there exists $t_2\in (0,T^*)$ such that $P(u(t_2))=0$. 
Since $E(u(t_2))>0$ and $K_{\omega}(u(t_2))<0$, 
it follows from Lemma \ref{lemma2} that $d(\omega)<S_{\omega}(u(t_2))$. 
Thus, we have 
\begin{align*}
d(\omega)<S_{\omega}(u(t_2))
=E(u_0)+\frac{\omega}{2}\|u_0\|_{L^2}^2
<E(\phi_{\omega})+\frac{\omega}{2}\|\phi_{\omega}\|_{L^2}^2=d(\omega). 
\end{align*}
This is a contradiction. 
Therefore, $P(u(t))<0$ for all $t\in [0,T^*)$. 
\end{proof}

\begin{lemma}\label{EP}
For any $v\in \mathcal{B}_{\omega}$, 
$$E(\phi_{\omega})\le E(v)-P(v).$$
\end{lemma}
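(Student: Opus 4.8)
The plan is to use the scaling $v\mapsto v^{\lambda}$ of \eqref{r1}--\eqref{r3} together with Lemmas~\ref{lemma1} and~\ref{lemma2}, reducing the assertion to a one-variable inequality for $g(\lambda):=E(v^{\lambda})$. Fix $v\in\mathcal B_{\omega}$ and set $A=\|\nabla v\|_{L^2}^2$, $B=\frac{a}{p+1}\|v\|_{L^{p+1}}^{p+1}$, $C=\frac{b}{q+1}\|v\|_{L^{q+1}}^{q+1}$, so that $g(\lambda)=\frac{A}{2}\lambda^{2}-B\lambda^{\alpha}-C\lambda^{\beta}$, $g(1)=E(v)$, and $g'(1)=P(v)$ by \eqref{r2}. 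Since $E(v)>0$, Lemma~\ref{lemma1} applies to $g$; as $g'(1)=P(v)<0$, the point $\lambda=1$, at which $g'<0$, must lie in $(0,\lambda_{1}(v))\cup(\lambda_{3}(v),\infty)$, and it cannot lie in $(0,\lambda_{1}(v))$ because $g<0$ there while $g(1)=E(v)>0$; hence $\lambda_{3}:=\lambda_{3}(v)<1$. Put $w:=v^{\lambda_{3}}$ and $\mu:=\lambda_{3}^{-1}>1$, so that $v=w^{\mu}$, $P(w)=\lambda_{3}\,g'(\lambda_{3})=0$, $E(w)=g(\lambda_{3})>0$, $\|w\|_{L^2}=\|v\|_{L^2}=\|\phi_{\omega}\|_{L^2}$, and, applying the last item of Lemma~\ref{lemma1} to $w$ (for which $\nu=1$ is the larger critical point of $\nu\mapsto E(w^{\nu})$, since $P(w)=0$ and $E(w)>0$), $E(w^{\nu})\le E(w)$ for all $\nu>0$.

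First I would show $d(\omega)\le S_{\omega}(w)$. If $K_{\omega}(w)<0$, this follows from Lemma~\ref{lemma2} applied to $w$ (which satisfies $E(w)>0$, $K_{\omega}(w)<0$, $P(w)=0$). If $K_{\omega}(w)\ge0$, then since $K_{\omega}(w^{\mu})=K_{\omega}(v)<0$ and $\nu\mapsto K_{\omega}(w^{\nu})$ is continuous, there is $\nu_{*}\in[1,\mu)$ with $K_{\omega}(w^{\nu_{*}})=0$, whence $d(\omega)\le S_{\omega}(w^{\nu_{*}})$ by \eqref{gs1}; using $\|w^{\nu_{*}}\|_{L^2}=\|w\|_{L^2}$ and $E(w^{\nu_{*}})\le E(w)$ this gives $d(\omega)\le S_{\omega}(w)$.

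Next I would prove $E(w)\le E(v)-P(v)$, i.e.\ the one-variable inequality $g(\lambda_{3})\le g(1)-g'(1)$. Using $g'(\lambda_{3})=0$ to eliminate $A$ (so $A=\alpha B\lambda_{3}^{\alpha-2}+\beta C\lambda_{3}^{\beta-2}$), the quantity $[g(1)-g'(1)]-g(\lambda_{3})$ becomes $B\,\Phi_{\alpha}(\lambda_{3})+C\,\Phi_{\beta}(\lambda_{3})$ for explicit functions $\Phi_{\gamma}$ with $\Phi_{\gamma}(1)=0$; a short computation of $\Phi_{\gamma}'$ shows $\Phi_{\beta}>0$ and $\Phi_{\alpha}<0$ on $(0,1)$, so the two contributions have opposite signs and there is no term-by-term estimate. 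To close the gap I would invoke the fact that $\lambda_{3}$ is the \emph{larger} critical point of $g$ --- equivalently, $\lambda_{3}$ exceeds the minimizer of $\lambda\mapsto\alpha B\lambda^{\alpha-2}+\beta C\lambda^{\beta-2}$ --- which yields the strict lower bound $C/B>\frac{\alpha(2-\alpha)}{\beta(\beta-2)}\lambda_{3}^{\alpha-\beta}$; inserting it reduces the claim to showing that $\frac{\alpha(2-\alpha)}{\beta(\beta-2)}\lambda^{\alpha-\beta}\Phi_{\beta}(\lambda)+\Phi_{\alpha}(\lambda)\ge0$ on $(0,1)$, which holds because this function vanishes at $\lambda=1$ and is monotone there (again by a direct derivative computation, using $0<\alpha<2<\beta$).

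Combining the two steps with $d(\omega)=S_{\omega}(\phi_{\omega})=E(\phi_{\omega})+\frac{\omega}{2}\|\phi_{\omega}\|_{L^2}^2$ gives
$$E(\phi_{\omega})+\tfrac{\omega}{2}\|\phi_{\omega}\|_{L^2}^2=d(\omega)\le S_{\omega}(w)=E(w)+\tfrac{\omega}{2}\|\phi_{\omega}\|_{L^2}^2\le\bigl(E(v)-P(v)\bigr)+\tfrac{\omega}{2}\|\phi_{\omega}\|_{L^2}^2,$$
hence $E(\phi_{\omega})\le E(v)-P(v)$. The main obstacle is the second step: the naive bound fails because the $L^{p+1}$ and $L^{q+1}$ terms enter $[g(1)-g'(1)]-g(\lambda_{3})$ with opposite signs, so one must genuinely use the position of $\lambda_{3}$ relative to the minimum of $\lambda\mapsto\alpha B\lambda^{\alpha-2}+\beta C\lambda^{\beta-2}$ --- the point where the splitting $1<p<1+4/N<q$, i.e.\ $0<\alpha<2<\beta$, enters decisively. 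The rest is bookkeeping with Lemmas~\ref{lemma1},~\ref{lemma2} and the identity $d(\omega)=S_{\omega}(\phi_{\omega})$.
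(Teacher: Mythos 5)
Your proposal is correct, but the way you close the key inequality $E(v^{\lambda_3})\le E(v)-P(v)$ is genuinely different from, and considerably more computational than, the paper's argument. Your first step is essentially the paper's: there one simply picks $\lambda_0\in(0,1)$ with $K_{\omega}(v^{\lambda_0})=0$ (possible since $K_{\omega}(v^{\lambda})\to\omega\|v\|_{L^2}^2>0$ as $\lambda\to0$ and $K_{\omega}(v)<0$) and gets $E(\phi_{\omega})\le E(v^{\lambda_0})\le E(v^{\lambda_3})$ from \eqref{gs1}, the fixed $L^2$ norm, and the last item of Lemma~\ref{lemma1}; your case split on the sign of $K_{\omega}(v^{\lambda_3})$ and the appeal to Lemma~\ref{lemma2} are not needed. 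For the second step the paper uses concavity of $\lambda\mapsto E(v^{\lambda})$ on $[\lambda_3,\infty)$ and the tangent line at $\lambda=1$: $E(v^{\lambda_3})\le E(v)+(\lambda_3-1)P(v)\le E(v)-P(v)$, since $0<\lambda_3<1$ and $P(v)<0$ --- a two-line argument. You instead eliminate $\|\nabla v\|_{L^2}^2$ via $P(v^{\lambda_3})=0$ and prove a sharp one-variable inequality using only the second-order condition at $\lambda_3$ (your bound $C/B>\frac{\alpha(2-\alpha)}{\beta(\beta-2)}\lambda_3^{\alpha-\beta}$ is precisely $\partial_{\lambda}^2E(v^{\lambda})\big|_{\lambda=\lambda_3}<0$). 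I checked that your deferred computation does close: with $\Phi_{\gamma}(\lambda)=(\gamma-1)-\frac{\gamma}{2}\lambda^{\gamma-2}-\frac{\gamma-2}{2}\lambda^{\gamma}$ and $\kappa=\frac{\alpha(2-\alpha)}{\beta(\beta-2)}$, the function $F(\lambda)=\kappa\lambda^{\alpha-\beta}\Phi_{\beta}(\lambda)+\Phi_{\alpha}(\lambda)$ has $F'(\lambda)=\kappa(\alpha-\beta)\lambda^{\alpha-\beta-1}\Phi_{\beta}(\lambda)$, because the $\Phi_{\alpha}'$ term cancels exactly against $\kappa\lambda^{\alpha-\beta}\Phi_{\beta}'$, and $\Phi_{\beta}>0$ on $(0,1)$, so $F$ is strictly decreasing on $(0,1)$ and $F>F(1)=0$ there. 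One caveat on your wording: $F'(1)=0$ (the zero at $\lambda=1$ is a double zero), so ``vanishes at $\lambda=1$ and is monotone there'' is not a valid justification near $\lambda=1$; what is needed, and what the formula for $F'$ actually gives, is monotonicity on all of $(0,1)$. In the end both proofs hinge on the same structural fact --- that $\lambda_3$ lies to the right of the critical point of $\lambda\mapsto\alpha B\lambda^{\alpha-2}+\beta C\lambda^{\beta-2}$, equivalently concavity of $E(v^{\lambda})$ beyond $\lambda_3$ --- but the paper's concavity/Taylor version obtains the inequality immediately, while yours trades that for an explicit, elementary, and sharper-looking computation.
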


\begin{proof}
Since $K_{\omega}(v)<0$, as in the proof of Lemma \ref{lemma2}, 
there exists $\lambda_0\in (0,1)$ such that 
$S_{\omega}(\phi_{\omega})=d(\omega)\le S_{\omega}(v^{\lambda_0})$. 
Moreover, since $\|v^{\lambda_0}\|_{L^2}^2
=\|v\|_{L^2}^2=\|\phi_{\omega}\|_{L^2}^2$, 
we have 
\begin{equation}\label{s1}
E(\phi_{\omega})\le E(v^{\lambda_0}).
\end{equation}

On the other hand, 
since $P(v^{\lambda})=\lambda \p_{\lambda} E(v^{\lambda})$, 
$P(v)<0$ and $E(v)>0$, it follows from Lemma \ref{lemma1} that 
$\lambda_3<1<\lambda_4$. 
Moreover, since $\p_{\lambda}^2 E(v^{\lambda})<0$ for $\lambda\in [\lambda_3,\infty)$, 
by a Taylor expansion, we have 
\begin{equation}\label{s2}
E(v^{\lambda_3}) \le E(v)+(\lambda_3-1) P(v) \le E(v)-P(v).
\end{equation}

Finally, by \eqref{s1}, \eqref{s2} and the third property of Lemma \ref{lemma1}, 
we have 
\begin{align*}
E(\phi_{\omega})\le 
E(v^{\lambda_0})\le E(v^{\lambda_3}) \le E(v)-P(v).
\end{align*}
This completes the proof. 
\end{proof}

Now we give the proof of Theorem \ref{thm2}. 

\begin{proof}[Proof of Theorem \ref{thm2}]
Let $u_0\in \Sigma \cap \mathcal{B}_{\omega}$ 
and let $u(t)$ be the solution of \eqref{nls} with $u(0)=u_0$. 
Then, by Lemma \ref{invariant}, 
$u(t)\in \mathcal{B}_{\omega}$ for all $t\in [0,T^*)$. 

Moreover, by the virial identity \eqref{virial} and Lemma \ref{EP}, we have 
\begin{align*}
\frac{1}{8}\frac{d^2}{dt^2}\|xu(t)\|_{L^2}^2
=P(u(t))\le E(u(t))-E(\phi_{\omega})
=E(u_0)-E(\phi_{\omega})<0
\end{align*}
for all $t\in [0,T^*)$, 
which implies $T^*<\infty$. 
This completes the proof. 
\end{proof}

\section{Proof of Theorem \ref{thm1}}

First, we prove the following lemma. 

\begin{lemma}\label{lemma5}
Let $a>0$, $b>0$, $1<p<1+4/N<q<2^*-1$, 
and let $\phi_{\omega}\in \mathcal{G}_{\omega}$. 
Then there exists $\omega_1>0$ such that 
$E(\phi_{\omega})>0$ for all $\omega\in (\omega_1,\infty)$. 
\end{lemma}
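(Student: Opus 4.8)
\textbf{Proof proposal for Lemma \ref{lemma5}.}

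The plan is to track the behaviour of $\phi_\omega$ as $\omega\to\infty$ through a rescaling that turns the problem into the single power equation with the $q$-nonlinearity, for which the ground state energy is known to be negative (since $q>1+4/N$). Concretely, I would set $\phi_\omega(x)=\omega^{1/(q-1)}\psi_\omega(\sqrt{\omega}\,x)$, which transforms \eqref{sp} into
\begin{align*}
-\Delta\psi_\omega+\psi_\omega-a\omega^{(p-q)/(q-1)}|\psi_\omega|^{p-1}\psi_\omega-b|\psi_\omega|^{q-1}\psi_\omega=0.
\end{align*}
Since $p<q$, the coefficient $a\omega^{(p-q)/(q-1)}\to 0$ as $\omega\to\infty$, so formally $\psi_\omega$ should converge to the ground state $\psi_\infty$ of the limiting equation $-\Delta\psi+\psi-b|\psi|^{q-1}\psi=0$. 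The first step is therefore to establish appropriate uniform bounds on $\psi_\omega$ in $H^1$ (via the variational characterization \eqref{gs1}--\eqref{gs2}, comparing $d(\omega)$ after rescaling with the ground-state level of the limiting problem and using the decoupling coefficient) and then extract a convergent subsequence, ruling out vanishing and dichotomy by a concentration-compactness argument or by exploiting that ground states of the subcritical problem are (up to translation) radial and decreasing.

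The second step is to compute $E(\phi_\omega)$ in terms of the rescaled quantities and pass to the limit. Using the scaling relations one finds
\begin{align*}
E(\phi_\omega)=\omega^{(q+1)/(q-1)-N/2}\left(\frac{1}{2}\|\nabla\psi_\omega\|_{L^2}^2-\frac{a\omega^{(p-q)/(q-1)}}{p+1}\|\psi_\omega\|_{L^{p+1}}^{p+1}-\frac{b}{q+1}\|\psi_\omega\|_{L^{q+1}}^{q+1}\right),
\end{align*}
so the sign of $E(\phi_\omega)$ is the sign of the bracketed term, which converges to $E_\infty(\psi_\infty):=\frac{1}{2}\|\nabla\psi_\infty\|_{L^2}^2-\frac{b}{q+1}\|\psi_\infty\|_{L^{q+1}}^{q+1}$ once the convergence $\psi_\omega\to\psi_\infty$ in $H^1$ is in hand. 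Since $q>1+4/N$, a Pohozaev/Nehari computation for the limiting ground state gives $E_\infty(\psi_\infty)<0$: indeed from $K_\infty(\psi_\infty)=0$ and $P_\infty(\psi_\infty)=0$ one gets $\|\nabla\psi_\infty\|_{L^2}^2=\frac{\beta}{\beta-2}\cdot\frac{2}{q+1}\cdot\dots$ — more cleanly, the identities $\|\nabla\psi_\infty\|_{L^2}^2=\frac{N(q-1)}{2(q+1)}b\|\psi_\infty\|_{L^{q+1}}^{q+1}$ yield $E_\infty(\psi_\infty)=\left(\frac{1}{2}\cdot\frac{N(q-1)}{2}-1\right)\frac{b}{q+1}\|\psi_\infty\|_{L^{q+1}}^{q+1}<0$ precisely because $N(q-1)/4>1$. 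Hence the bracket is eventually bounded above by $\tfrac12 E_\infty(\psi_\infty)<0$ for $\omega$ large, giving $E(\phi_\omega)<0$ — wait, this gives the wrong sign.

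Let me correct the sign bookkeeping: the exponent $(q+1)/(q-1)-N/2$ is positive exactly when $q<1+4/(N-2)$ i.e. below Sobolev, but its sign is irrelevant since $\omega>0$; the point is $E(\phi_\omega)$ has the sign of the bracket, and the bracket tends to a \emph{negative} limit, which would contradict the lemma. The resolution — and the genuinely delicate point — is that the correct rescaling to use is not the $q$-rescaling but must be chosen so the limiting functional is governed by the \emph{subcritical} $p$-term; equivalently one should write $\phi_\omega(x)=\omega^{1/(p-1)}\psi_\omega(\sqrt\omega x)$, under which the $b$-term carries the vanishing coefficient $b\omega^{(q-p)/(p-1)}$... but that coefficient blows up. So in fact neither naive rescaling works globally, and the real content of the lemma is that for large $\omega$ the ground state is \emph{small} in a suitable sense and the positive gradient term dominates. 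The cleanest route, and the one I would pursue, is: show $\|\phi_\omega\|_{L^2}^2$, $\|\phi_\omega\|_{L^{p+1}}^{p+1}$, $\|\phi_\omega\|_{L^{q+1}}^{q+1}$ all tend to $0$ as $\omega\to\infty$ (using $d(\omega)$ estimates and the Nehari identity $K_\omega(\phi_\omega)=0$), then use the Pohozaev identity $P(\phi_\omega)=0$ together with $N(p-1)/4<1<N(q-1)/4$ to write
\begin{align*}
E(\phi_\omega)=\left(\frac12-\frac{1}{\alpha}\right)\|\nabla\phi_\omega\|_{L^2}^2+\left(\frac{1}{\alpha}-\frac{1}{\beta}\right)\frac{b\beta}{q+1}\|\phi_\omega\|_{L^{q+1}}^{q+1}
\end{align*}
or a similar exact identity obtained by eliminating one term via $P(\phi_\omega)=0$; the coefficient of $\|\phi_\omega\|_{L^{q+1}}^{q+1}$ is positive since $\alpha<\beta$ — but the coefficient $\frac12-\frac1\alpha$ of the gradient term is \emph{negative} since $\alpha<2$. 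So one must instead eliminate the $q$-term using $K_\omega(\phi_\omega)=0$, yielding $E(\phi_\omega)=\left(\frac12-\frac1{q+1}\right)\|\nabla\phi_\omega\|_{L^2}^2+\frac{\omega}{2}\left(1-\frac{2}{q+1}\right)\|\phi_\omega\|_{L^2}^2-a\left(\frac{1}{p+1}-\frac{1}{q+1}\right)\|\phi_\omega\|_{L^{p+1}}^{p+1}$, and then show via the vanishing of the $L^2$ and $L^{p+1}$ norms relative to $\omega\|\phi_\omega\|_{L^2}^2$ — this is the main obstacle — that the first two (manifestly nonnegative) terms dominate the last. I expect the decisive estimate to be a lower bound $\omega\|\phi_\omega\|_{L^2}^2 \gtrsim \|\phi_\omega\|_{L^{p+1}}^{p+1}$ with a constant $\to\infty$, obtainable from Gagliardo–Nirenberg $\|\phi_\omega\|_{L^{p+1}}^{p+1}\lesssim \|\nabla\phi_\omega\|_{L^2}^{\alpha}\|\phi_\omega\|_{L^2}^{p+1-\alpha}$ combined with the $H^1$-scaling of the problem, and this interpolation against the $\omega\|\phi_\omega\|_{L^2}^2$ term (using $\alpha<2$) is exactly where the hypothesis $p<1+4/N$ enters. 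This last step is the crux and I would devote the bulk of the proof to making it rigorous.
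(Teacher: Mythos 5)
Your first route was in fact the right one, and it is essentially the paper's: the paper uses $P(\phi_{\omega})=0$ to reduce the lemma to the inequality $\frac{(2-\alpha)a}{p+1}\|\phi_{\omega}\|_{L^{p+1}}^{p+1}<\frac{(\beta-2)b}{q+1}\|\phi_{\omega}\|_{L^{q+1}}^{q+1}$ and then invokes $\lim_{\omega\to\infty}\|\phi_{\omega}\|_{L^{p+1}}^{p+1}/\|\phi_{\omega}\|_{L^{q+1}}^{q+1}=0$, proved as in Theorem 2 of \cite{oht2} by exactly the rescaling $\phi_{\omega}(x)=\omega^{1/(q-1)}\psi_{\omega}(\sqrt{\omega}\,x)$ that you set up. What derailed you is a sign error in the limiting energy: for the limit equation $-\Delta\psi+\psi-b|\psi|^{q-1}\psi=0$, the virial identity gives $\|\nabla\psi_{\infty}\|_{L^2}^2=\frac{b\beta}{q+1}\|\psi_{\infty}\|_{L^{q+1}}^{q+1}$ with $\beta=\frac{N}{2}(q-1)>2$, hence $E_{\infty}(\psi_{\infty})=\bigl(\frac{\beta}{2}-1\bigr)\frac{b}{q+1}\|\psi_{\infty}\|_{L^{q+1}}^{q+1}>0$, not $<0$; the hypothesis $N(q-1)/4>1$ makes the prefactor \emph{positive}. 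So the bracket in your formula for $E(\phi_{\omega})$ tends to a positive limit, which is precisely the desired conclusion, and there was no reason to abandon the rescaling. (What would remain to be done there is the convergence $\psi_{\omega}\to\psi_{\infty}$, or at least convergence of the relevant ratio of norms, which you only sketch and which the paper outsources to \cite{oht2}.)

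The alternative you finally settle on does not work as written. The identity you display after eliminating the $q$-term with $K_{\omega}(\phi_{\omega})=0$ is an identity for $S_{\omega}(\phi_{\omega})$, not for $E(\phi_{\omega})$: in $E(\phi_{\omega})$ the mass term enters with the negative coefficient $-\frac{\omega}{q+1}\|\phi_{\omega}\|_{L^2}^2$, so your two ``manifestly nonnegative'' terms are not both nonnegative, and the proposed decisive bound $\omega\|\phi_{\omega}\|_{L^2}^2\gtrsim\|\phi_{\omega}\|_{L^{p+1}}^{p+1}$ would then work against you rather than for you. Moreover, the preliminary claim that $\|\phi_{\omega}\|_{L^{p+1}}^{p+1}$ and $\|\phi_{\omega}\|_{L^{q+1}}^{q+1}$ tend to $0$ as $\omega\to\infty$ is false in general: by the very scaling you wrote, $\|\phi_{\omega}\|_{L^{q+1}}^{q+1}=\omega^{(q+1)/(q-1)-N/2}\|\psi_{\omega}\|_{L^{q+1}}^{q+1}\to\infty$ because $q<2^*-1$. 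So the proposal as it stands has a genuine gap; the repair is simply to return to your first computation with the corrected sign, or equivalently to the paper's reduction via $P(\phi_{\omega})=0$ combined with the ratio limit from \cite{oht2}.
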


\begin{proof}
Since $P(\phi_{\omega})=0$, 
we see that $E(\phi_{\omega})>0$ if and only if 
\begin{equation}\label{Epq}
\frac{(2-\alpha)a}{p+1}\|\phi_{\omega}\|_{L^{p+1}}^{p+1}
<\frac{(\beta-2)b}{q+1}\|\phi_{\omega}\|_{L^{q+1}}^{q+1}. 
\end{equation}
Moreover, in the same way as the proof of Theorem 2 in \cite{oht2}, 
we can prove that 
\begin{equation*}
\lim_{\omega\to \infty}
\frac{\|\phi_{\omega}\|_{L^{p+1}}^{p+1}}
{\|\phi_{\omega}\|_{L^{q+1}}^{q+1}}=0.
\end{equation*}
Thus, there exists  $\omega_1>0$ such that 
\eqref{Epq} holds for all $\omega\in (\omega_1,\infty)$. 
\end{proof}

\begin{proof}[Proof of Theorem \ref{thm1}]
Let $\omega\in (\omega_1,\infty)$. 
Then, by Lemma \ref{lemma5}, $E(\phi_{\omega})>0$. 

For $\lambda>0$, we consider the scaling 
$\phi_{\omega}^{\lambda}(x)=\lambda^{N/2}\phi_{\omega}(\lambda x)$, 
and prove that there exists $\lambda_0\in (1,\infty)$ such that 
$\phi_{\omega}^{\lambda}\in \mathcal{B}_{\omega}$ 
for all $\lambda\in (1,\lambda_0)$. 

First, we have $\|\phi_{\omega}^{\lambda}\|_{L^2}^2=\|\phi_{\omega}\|_{L^2}^2$ 
for all $\lambda>0$. 
Next, since $P(\phi_{\omega})=0$ and $E(\phi_{\omega})>0$, 
by Lemma \ref{lemma1} and \eqref{r2},  
there exists $\lambda_4>1$ such that 
$$0<E(\phi_{\omega}^{\lambda})<E(\phi_{\omega}), \quad 
P(\phi_{\omega}^{\lambda})<0$$ 
for all $\lambda\in (1,\lambda_4)$. 
Finally, since $P(\phi_{\omega})=0$, we have 
\begin{align*}
\p_{\lambda} K_{\omega}(\phi_{\omega}^{\lambda})\big|_{\lambda=1}
=-\frac{(p-1)a\alpha}{p+1}\|\phi_{\omega}\|_{L^{p+1}}^{p+1}
-\frac{(q-1)b\beta}{q+1}\|\phi_{\omega}\|_{L^{q+1}}^{q+1}<0. 
\end{align*}
Since $K_{\omega}(\phi_{\omega})=0$, 
there exists $\lambda_0\in (1,\lambda_4)$ such that 
$K_{\omega}(\phi_{\omega}^{\lambda})<0$ for all $\lambda\in (1,\lambda_0)$. 

Therefore, $\phi_{\omega}^{\lambda}\in \mathcal{B}_{\omega}$ 
for all $\lambda\in (1,\lambda_0)$. 
Moreover, since $\phi_{\omega}^{\lambda}\in \Sigma$ for $\lambda>0$,  
it follows from Theorem \ref{thm2} that for any $\lambda\in (1,\lambda_0)$, 
the solution $u(t)$ of \eqref{nls} with $u(0)=\phi_{\omega}^{\lambda}$ blows up 
in finite time. 

Finally, since $\displaystyle{\lim_{\lambda\to 1}
\|\phi_{\omega}^{\lambda}-\phi_{\omega}\|_{H^1}=0}$, 
the proof is completed. 
\end{proof}

\vspace{2mm} \noindent 
{\bf Acknowledgment}. 
The research of the first author was supported in part by 
JSPS KAKENHI Grant Number 24540163.

\end{document}